\newcommand{\beqn}{\begin{eqnarray}}
\newcommand{\eeqn}{\end{eqnarray}}
\newcommand{\be}{\begin{equation}}
\newcommand{\ee}{\end{equation}}
\newcommand{\ba}{\begin{array}}
\newcommand{\ea}{\end{array}}
\newcommand{\pa}{\partial}
\newcommand{\bs}{\bigskip}
\newcommand{\bfr}{\begin{flushright}}
\newcommand{\efr}{\end{flushright}}
\newcommand{\bfl}{\begin{flushleft}}
\newcommand{\efl}{\end{flushleft}}
\newcommand{\R}{{\bf R}}
\newcommand{\T}{{\bf T}}
\newcommand{\D}{{\bf D}}
\newcommand{\F}{\mathbf{F}}
\newcommand{\G}{\mathbf{G}}
\newcommand{\intl}{\int\limits}
\newcommand{\curl}{{\rm curl\ }}
\def\phi{\varphi}
\newcommand{\eps}{\varepsilon}
\newcommand{\grad}{\mathrm{grad}}
\newcommand{\dive}{\mathrm{div}}
\newtheorem{theorem}{Theorem}[section]
\newtheorem{lemma}{Lemma}[section]
\newtheorem{corollary}{Corollary}[section]
\title{ON THE ANALYTICITY OF PARTICLE TRAJECTORIES\\ IN THE IDEAL INCOMPRESSIBLE FLUID}
\author{A. Shnirelman\\
Concordia University, Montreal, Canada\\
Institute for Advanced Study, Princeton, USA}
\begin{document}

\maketitle

\author

\section{Introduction}

The aim of this note is to give a new proof of the following striking fact. Consider the motion of ideal incompressible fluid in a bounded domain $M\subset\R^n$, or on a compact analytic Riemannian manifold (in this paper we restrict our analysis to the simplest case $M=\T^3$, for the ideas are the most transparent in this case, and the technical issues are reduced to the minimum). The flow is described by the Euler equations

\beqn
&&\frac{\pa u}{\pa t}+(u,\nabla)u+\nabla p=0,\\
&&\nabla \cdot u=0.
\eeqn
Suppose

\be
u(x,0)=u_0(x)
\ee
is the initial velocity. This problem has been studied since the seminal works of Gunter and Lichtenstein (\cite{L2}, \cite{Gun}). Here is a typical result. Suppose the initial velocity $u_0\in H^s, \ \ s>\frac{n}{2}+1$ (which is a little better than $C^1$). Then there exists a unique solution $u(x,t)\in H^s$ for $|t|<T(u_0)$. Moreover, solution $u(x,t)$ has exactly the same regularity as the initial velocity, i.e. if $u(x,t)\in H^r, \ \ r>s$ for some $t>0$, then $u_0\in H^r$. 

Now consider the trajectories of fluid particles defined by the equation

\be
\frac{dx}{dt}=u(x(t),t)
\ee
with the initial condition

\be
x(0)=x_0\in M.
\ee

\begin{theorem}\label{th1}
For any $x_0\in M$ the particle trajectory $x(t)$ is an analytic curve.
\end{theorem}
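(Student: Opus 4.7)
The plan is to work in Lagrangian coordinates and reduce the problem to showing that a certain nonlinear ODE in an infinite-dimensional space has analytic-in-time solutions. Let $\eta(t,a)$ be the flow map of $u$, so that our trajectory is $x(t) = \eta(t, x_0)$ and $\det\nabla_a\eta \equiv 1$ by incompressibility. Writing $v(t,a) = \partial_t\eta(t,a)$, the Euler momentum equation reads
$$
\partial_t v(t,a) = -(\nabla p)(\eta(t,a),t),
$$
and one can express the right hand side in closed form in terms of $\eta$ and $v$. Indeed, taking divergence of the momentum equation yields $-\Delta p = \partial_i\partial_j(u_iu_j)$; solving this on $\T^3$ via the Green's function of the Laplacian and changing variables $y = \eta(t,b)$ using incompressibility, one obtains
$$
\partial_t v(t,a) = \int_{\T^3} K\bigl(\eta(t,a) - \eta(t,b)\bigr) : \bigl(v(t,b)\otimes v(t,b)\bigr)\,db,
$$
where $K$ is a matrix-valued Calderon--Zygmund kernel (a second derivative of the Green's function), interpreted as a principal-value integral. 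The point is that this is a closed, autonomous system in the pair $(\eta,v)$ in which no spatial derivative of $u$ appears explicitly.

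Next, I would establish analyticity in $t$ by estimating the Taylor coefficients $\eta^{(k)}(0, x_0) = \partial_t^k\eta(t, x_0)|_{t=0}$. Differentiating the closed system $k$ times in $t$ and applying the Faa di Bruno formula to the compositions $b\mapsto \eta(t,b)$ appearing inside $K$, one obtains a recursion expressing $\eta^{(k+2)}$ as a sum over integer partitions of $k$ of integrals of products of lower-order derivatives $\eta^{(j)}$ and $v^{(\ell)}$, with at most $k+1$ factors in each term. The combinatorial structure of this recursion is precisely that of a Cauchy product; provided the operator norms of the linear integral operators that appear at each stage are uniformly bounded, a standard Cauchy-majorant argument yields
$$
\bigl|\eta^{(k)}(0, x_0)\bigr|\leq M\cdot C^k\cdot k!
$$
for suitable constants $M,C>0$, which is exactly analyticity of $x(t) = \eta(t, x_0)$ at $t=0$. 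The conclusion then extends to any time in the interval of existence by restarting the argument from $u(\cdot, t_0)$.

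The main obstacle is the singular nature of $K$: naive differentiation of $K(\eta(t,a)-\eta(t,b))$ in $t$ produces progressively more singular kernels $\nabla^j K$, whose $L^1$ norms diverge. The cure is to exploit built-in cancellation: each extra spatial derivative of $K$ is paired with a difference $v^{(\ell)}(t,a)-v^{(\ell)}(t,b)$ arising from the chain rule, which vanishes to first order as $a\to b$ and reduces the effective order of the singularity. Controlling this cancellation \emph{uniformly} across iterations --- so that the norm of each integral operator remains bounded by a fixed multiple of that of the previous stage --- is the delicate part. It rests on the boundedness of Calderon--Zygmund operators on Holder and Sobolev spaces, combined with the preserved bi-Lipschitz regularity of $\eta(t,\cdot)$ throughout the interval of existence guaranteed by the assumption $u_0\in H^s$.
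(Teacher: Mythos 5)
Your route is genuinely different from the paper's. The paper follows Lichtenstein's ``consistently complex'' strategy: it passes to the vorticity (Kelvin--Helmholtz) formulation, which turns the flow into a \emph{first-order} ODE $\partial_t g_t=\F_{g_t}^{-1}\G_{g_t}\omega_0$ on the diffeomorphism group whose right-hand side is an analytic map of Banach spaces (the derivative lost in $g'\cdot\omega_0$ is regained by $\curl^{-1}$), and then runs Picard iteration over a complex disk $|t|\le T$; each iterate is holomorphic in $t$, the convergence is uniform, and analyticity of the limit is free from the Cauchy formula --- no factorial bounds on Taylor coefficients are ever needed. You instead keep the second-order momentum equation, close it with a singular integral in Lagrangian variables, and estimate $|\eta^{(k)}(0,x_0)|\le MC^k k!$ directly. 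This is the ``consistently real'' route the paper explicitly attributes to Serfati (and which was later carried out in full by Frisch--Zheligovsky and Constantin--Vicol--Wang), so it is a viable program; what the paper's method buys is that the entire combinatorial and singular-integral machinery you describe is replaced by one soft complex-analytic fact.

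That said, as written your proposal has two concrete problems. First, your closed system is off by one derivative: from $-\Delta p=\partial_i\partial_j(u_iu_j)$ the principal-value integral of a second derivative of the Green's function against $v\otimes v$ reproduces $p\circ\eta$, not $(\nabla p)\circ\eta$; to get $\nabla p$ with only $v\otimes v$ inside you would need $\nabla\partial_i\partial_j G\sim|x|^{-(n+1)}$, which is not locally integrable and not a Calderon--Zygmund kernel, so the displayed equation is not a legitimate principal value. The standard fix is to keep one derivative on the density, writing $\Delta p=-\,\partial_iu_j\,\partial_ju_i$ and using $(\nabla u)\circ\eta=\nabla_a v\,(\nabla_a\eta)^{-1}$, which yields an integrable kernel $\nabla G$ but reintroduces spatial derivatives of $v$ and $\eta$ --- contradicting your claim that no spatial derivative of $u$ appears. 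Second, the step you yourself flag as delicate --- uniform boundedness, over all orders $k$, of the increasingly singular operators produced by differentiating $K(\eta(t,a)-\eta(t,b))$ in $t$, using the cancellation against $v^{(\ell)}(t,a)-v^{(\ell)}(t,b)$ --- is asserted rather than proved, and it is precisely where all the work in the real-variable proofs lives. Until that estimate is established with a constant independent of $k$, the Cauchy-majorant recursion does not close and the bound $MC^kk!$ does not follow.
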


The first proof of this result was claimed by P.Serfati in 1993 (\cite{Ser1}, \cite{Ser2}). His proof is based of commutator estimates, and is consistently real; particle trajectories are real-analytic because they are "very smooth", i.e. their derivatives grow not very fast. Recently N.Nadirashvili proved that the flow lines of a {\it stationary} (time independent) solution are analytic [\cite{Nad}]. His proof is based on a detailed analysis of a semilinear elliptic equation satisfied by the stream function of the stationary solution, and on the classical idea (Levy-Petrovsky) of treating an elliptic equation as a hyperbolic one in an appropriate complex direction.

In this work the viewpoint is consistently complex: the particle trajectory $x(t)$ is analytic {\it because} it can be continued to complex values of $t$.  Our proof is based on the forgotten idea of Leon Lichtenstein which is buried in his paper \cite{L1} (published in 1925!). Lichtenstein himself has neither formulated nor proved this result; the adequate tools of complex analysis in the Banach spaces \cite{HP} were created much later.

\setcounter{equation}{0}

\section{Proof of analyticity of particle trajectories}

To show the proper meaning of the theorem, we have to use the Lagrangian description of the fluid flow. Consider the group $\D=SDiff^s(M)$ of volume preserving diffeomorphisms of $M$ of class $H^s$ (better to say, we always consider the component of unity of the group $\D$).  Let $Id$ be the unity in the group $\D$, i.e. the identity map. The Lie algebra of the group $\D$, $T_{Id}\D$ consists of vector fields $u(x)\in H^s$ on $M$ such that $\dive u=0$, and $\int_M udx=0$. We are always working in an $H^s$-neighborhood of $Id$ which in what follows goes without saying. The group $\D$ is equipped with the right-invariant $L^2$ metric (kinetic energy), defined for any $u, v \in T_{Id}\D$ as $(u, v)=\int_M u\cdot v dx$; the fluid flows are geodesics on $\D$ in this metric. They are described by the Lagrange equations (which express just the D'Alembert principle):

\be
\frac{\pa^2 g}{\pa t^2}+\nabla p \circ g=0,
\ee
where $g=g_t \in\D$ is the fluid trajectory, and $p=p(x,t)$ is a new unknown scalar function (pressure).

Consider the geodesic exponential map on $\D$. Let us denote by $g_t(v)$ the solution of the Lagrange equation (2.1) satisfying initial conditions

\be
g_0(v)=Id, \ \ \dot g_t(v)|_{t=0}=v.
\ee
Then the geodesic exponential map $Exp: T_{Id}\D \to \D$ is defined as

\be
Exp: v \mapsto g_1(v).
\ee
This map is defined for $||v||_s<\eps$ for some $\eps>0$.

\begin{theorem}
The map $Exp$ is real-analytic.
\end{theorem}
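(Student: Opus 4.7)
My approach is complex-analytic, following the cue in the introduction: I plan to realise $Exp$ as the time-$1$ map of the flow of a vector field on $T\D^s$ and show that this vector field extends holomorphically to a complex Banach neighborhood. Real-analyticity of $Exp$ then follows from the Cauchy theorem for ODE in Banach spaces (Hille--Phillips, \cite{HP}).

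\emph{Reformulation.} The Lagrange equation (2.1) becomes the first-order system on $T\D^s$
\[
\frac{d}{dt}(g,\dot g)=\bigl(\dot g,\,F(g,\dot g)\bigr),\qquad F(g,\dot g)=-(\nabla p)\circ g,
\]
with the pressure $p$ determined by $(g,\dot g)$ through incompressibility. Writing the Eulerian velocity $u=\dot g\circ g^{-1}$ and taking divergence of the Euler equation yields $-\Delta p=\partial_i u_j\,\partial_j u_i$, whence
\[
F(g,\dot g)=\bigl[\nabla\Delta^{-1}\bigl(\partial_i u_j\,\partial_j u_i\bigr)\bigr]\circ g,\qquad u=\dot g\circ g^{-1}.
\]

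\emph{Analyticity of the vector field.} The decisive step is to prove that $F$ is real-analytic $T\D^s\to T_{Id}\D^s$ and extends holomorphically to the complexification. The building blocks are all real-analytic in the $H^s$ scale with $s>n/2+1$: diffeomorphism inversion $g\mapsto g^{-1}$ near $Id$ by the analytic implicit function theorem; right-composition $\varphi\mapsto\varphi\circ g$, linear in $\varphi$ and analytic in $g$; the polynomial map $u\mapsto\partial_i u_j\,\partial_j u_i$; and the bounded operator $\nabla\Delta^{-1}$ acting on mean-zero data. Parametrising a neighborhood of $Id$ by $w=g-Id$ in a small ball of the complexified space $H^s(\T^3;\mathbb{C}^3)$, each block admits an obvious holomorphic extension, so their composition $F$ is holomorphic on a complex neighborhood of the zero section. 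Hille--Phillips then yields a local holomorphic flow, and the time-$1$ map $v\mapsto g_1(0,v)=Exp(v)$ is holomorphic on a complex ball in $T_{Id}\D^s\otimes\mathbb{C}$; restricting to real arguments gives real-analyticity.

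\emph{Main obstacle.} The one genuinely delicate point is the derivative count. Naively $\nabla p$ sits one derivative below $u$, so term by term $F$ appears to lose a derivative and would not even define a vector field on $T\D^s$. The Lagrangian miracle observed by Ebin--Marsden is that right-composition by $g$ compensates this loss: the top-order component of $(\nabla p)\circ g$ is a total divergence which cancels against the corresponding part of the pressure. The technical work I expect to spend most effort on is encoding this cancellation in a form that is manifestly stable under complexification of $g$, so that the holomorphic estimates close without any regularity loss and the composition of holomorphic blocks above is literally bounded $T\D^s\to T_{Id}\D^s$ after analytic continuation.
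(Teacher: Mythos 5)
There is a genuine gap, and it sits exactly where you placed your confidence rather than where you placed your worry. The claim that the ``building blocks are all real-analytic in the $H^s$ scale'' is false for two of them: inversion $g\mapsto g^{-1}$ and the joint composition $(\phi,g)\mapsto\phi\circ g$ are only continuous on the $H^s$ diffeomorphism group, not analytic, and not even $C^1$ at fixed regularity. The formal derivative of $\phi\circ g$ in the $g$-direction is $(\nabla\phi\circ g)\cdot\delta g$, which costs one derivative of $\phi$; the derivative of inversion involves $(g')^{-1}\circ g^{-1}$ and costs one derivative of $g$. (This is precisely the phenomenon the paper's final Remark invokes, citing Milnor, to explain why the group exponential map is not $C^1$.) So you cannot obtain analyticity of your spray $F$ by composing these blocks, and the separate holomorphic extensions you invoke do not exist on any fixed complexified $H^s$ ball. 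Meanwhile the obstacle you do flag --- that $\nabla p$ ``sits one derivative below $u$'' --- is not actually an obstacle: $\pa_iu_j\,\pa_ju_i\in H^{s-1}$ since $H^{s-1}$ is an algebra for $s>n/2+1$, and $\nabla\Delta^{-1}$ maps $H^{s-1}_0$ into $H^s$, so $\nabla p\in H^s$ with no loss and no cancellation needed at that step.

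The paper's proof is organized precisely so that composition and inversion are never differentiated. It replaces the second-order Lagrange equation by the first-order vorticity (Kelvin--Helmholtz) equation $\pa_tg_t=(\curl^{-1}g_{t*}\omega_0)\circ g_t$, passes to the Lagrangian velocity $U=u\circ g$, and observes that the conjugated curl $(\curl(U\circ g^{-1}))\circ g$ and the conjugated divergence constraint become first-order differential operators in $U$ whose coefficients are, pointwise in $x$, rational functions of the matrix $g'(x)$ --- purely algebraic in $g'$, hence manifestly analytic in $g$, with no composition surviving anywhere. Inverting the resulting elliptic system yields a right-hand side $\F_{g_t}^{-1}\G_{g_t}\omega_0$ analytic in $g$, after which a Picard iteration over a complex time disk (the paper's Theorem 2.3) finishes the argument. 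If you insist on keeping the second-order formulation, the same conjugation is mandatory: you must show that the operator $u\mapsto\nabla\Delta^{-1}(\pa_iu_j\pa_ju_i)$, rewritten entirely in the Lagrangian frame, has coefficients algebraic in $g'$ and $(\det g')^{-1}$ and therefore complexifies; that, and not a cancellation of a total divergence, is the content of the Ebin--Marsden observation. As written, your argument does not close.
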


\begin{corollary}
For any $v\in T_{Id}\D$ the trajectory $g_t(v)$ is an analytic curve in $\D$.
\end{corollary}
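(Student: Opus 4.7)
The plan is to derive the corollary from Theorem 2.1 together with two symmetries of the Lagrange equation (2.1): its time-homogeneity of degree $-2$, and its right-invariance under the group $\D$.

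First I would establish the identity $g_t(v)=Exp(tv)$ in a neighbourhood of $t=0$. This follows from the rescaling $s\mapsto\lambda s$: a direct chain-rule computation shows that if $g_s$ solves (2.1) with initial velocity $v$ and pressure $p(x,s)$, then $\tilde g_s:=g_{\lambda s}(v)$ solves (2.1) with pressure $\lambda^2 p(x,\lambda s)$ and initial velocity $\lambda v$. Uniqueness of the Lagrange flow gives $g_\lambda(v)=g_1(\lambda v)=Exp(\lambda v)$, and hence $g_t(v)=Exp(tv)$ whenever $\|tv\|_s<\eps$. Since $t\mapsto tv$ is linear (thus real-analytic) into $T_{Id}\D$ and $Exp$ is real-analytic by Theorem 2.1, the composition $t\mapsto g_t(v)$ is real-analytic on the interval $|t|<\eps/\|v\|_s$.

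To extend analyticity to an arbitrary time $t_0$ in the interval of existence, I would invoke the right-invariance of (2.1). Set $h:=g_{t_0}(v)\in\D$ and let $w:=\dot g_{t_0}(v)\circ h^{-1}\in T_{Id}\D$ be the Eulerian velocity at time $t_0$. Because the equation and its initial data transform covariantly under right multiplication by $h^{-1}$, the curve $s\mapsto g_{t_0+s}(v)\circ h^{-1}$ is itself a geodesic issuing from $Id$ with initial velocity $w$, so the first step yields
\[
g_{t_0+s}(v)=Exp(sw)\circ h
\]
for $|s|$ small. Right composition with a fixed $H^s$-diffeomorphism is a bounded linear (in particular real-analytic) operation on $\D$, so the right-hand side is real-analytic in $s$, which gives analyticity of $g_t(v)$ at $t_0$.

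The genuinely routine step is the time-rescaling, which is a one-line chain-rule argument. The only part that calls for a bit of care is the right-translation step: one must verify that conjugation by the fixed $H^s$-diffeomorphism $h$ preserves the class of real-analytic curves on $\D$. This is standard in the Lagrangian formalism (right translation by a fixed element is bounded and linear in the natural coordinates on $\D$ for $s>n/2+1$) and presents no real obstacle; once it is in hand, the corollary follows at once from Theorem 2.1.
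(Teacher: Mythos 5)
Your proof is correct and rests on exactly the same observation as the paper's one-line argument, namely the identity $g_t(v)=Exp(t\cdot v)$ combined with the analyticity of $Exp$ from Theorem 2.1. You additionally supply the time-rescaling computation justifying that identity (which the paper merely asserts with ``observe'') and a right-translation step extending analyticity beyond the range $\|tv\|_s<\eps$, both of which are sound but go beyond what the paper's local statement requires.
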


\begin{proof}
Observe that $g_t(v)=Exp(t\cdot v)$; now we see that the trajectory $g_t(v)$ is an image of the segment $\{t \cdot v: \ 0\le t\le 1\}$ under the analytical map $Exp$.
\end{proof}

\begin{corollary}
If $u(x)$ is a stationary (time independent) solution of the Euler equations, then the flow lines are analytic curves in $M$.
\end{corollary}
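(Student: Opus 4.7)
The plan is to exhibit the flow line through a point $x_0\in M$ as a composition of real-analytic maps. For a stationary velocity field $u$, the flow line through $x_0$ is the integral curve of the autonomous vector field $u$, and this coincides with the Lagrangian particle trajectory $\gamma(t):=g_t(u)(x_0)$. The previous corollary already tells us that $t\mapsto g_t(u)$ is an analytic curve in the group $\D$; what remains is to transfer this analyticity from $\D$ down to $M$ through the point-evaluation map $E_{x_0}:\D \to M$, $g\mapsto g(x_0)$.

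First I would verify that $E_{x_0}$ is real-analytic in a neighborhood of the identity. Working in the chart on $\D$ given, say, by $w\mapsto Id+w$ for $M=\T^3$ (with $w$ a small divergence-free $H^s$ vector field lifted to $\R^3$), one has $E_{x_0}(Id+w)=x_0+w(x_0)$. The map $w\mapsto w(x_0)$ is a continuous linear functional on $H^s$ by the Sobolev embedding $H^s\hookrightarrow C^0$ (valid since $s>n/2+1$), hence bounded and therefore real-analytic. So $E_{x_0}$ is an affine bounded map, and real-analytic.

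Next I would compose: the flow line $\gamma(t)=E_{x_0}(g_t(u))$ is real-analytic on a neighborhood of $t=0$, being the composition of the analytic curve $t\mapsto g_t(u)$ (from the preceding corollary) with the analytic evaluation map $E_{x_0}$. To extend to every $t$ in the interval of existence I would invoke the one-parameter group property $g_{t+\tau}(u)=g_\tau(u)\circ g_t(u)$ that holds because $u$ is stationary: analyticity of $\gamma$ near $t_0$ follows by re-running the argument with $x_0$ replaced by $\gamma(t_0)$.

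The main obstacle, such as it is, is the analyticity of the point-evaluation, and this is essentially a free consequence of the Sobolev embedding already built into the standing assumption $s>n/2+1$. Beyond that, the corollary reduces to a formal chain of analytic compositions, so I expect no genuine analytic difficulty once Theorem 2.1 and its first corollary are in hand.
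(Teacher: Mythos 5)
Your proof is correct and follows essentially the same route as the paper: the paper likewise reduces the corollary to the already-established analyticity in $t$ of the particle trajectories, viewing the flow lines of a stationary field as the projections of the trajectories $(t,x(t))$ from $\R\times M$ to $M$, and your evaluation-map and group-property details merely fill in what the paper leaves implicit (the evaluation argument is exactly the one the paper uses to prove Theorem 1.1). The only point the paper makes that you omit is the caveat at critical points where $u=0$, where the projected curve degenerates to a single point; this is harmless, since there the trajectory is constant.
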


\begin{proof}
For a stationary flow the flow lines are projections of trajectories $(t, x(t))$ from $\R\times M$ to $M$; hence, they are analytic beyond the critical points where $u=\frac{dx}{dt}=0$.
\end{proof}

\begin{proof}[Proof of Theorem 1.1]If the trajectory, i.e. the family of diffeomorphisms $g_t(v)=g\cdot t: | M \to M$ depends analytically on $t$, then the image of every point $x\in M$, $g_t(x)$, depends analytically on $t$, i.e. $g_t(x)$ is an analytical curve in $M$.
\end{proof}

In what follows, we consider the case $M=\T^3$, the 3-d torus; general case is done similarly, but here notations are simpler. On the torus, we restrict ourselves to the diffeomorphisms $g \in\D$ preserfing the center of mass: $\int_M(g(x)-x) dx=0$. For this group we keep the same notation $\D$. 

Following Lichtenstein \cite{L2}  and Gunter \cite{Gun}, we reduce our problem to the vorticity equation. Let $\omega=\curl u$. Then, by the Kelvin-Helmholtz theorem, $\omega(t)=g_{t*}\omega(0)$, i.e., in the case $M=\T^3$,

\be
\omega(g_t(x),t)=g_t'\cdot\omega(x,0),
\ee
where $g_t'$ is the Jacobi matrix of $g_t$. The vorticity field $\omega(x,t)$ has the properties (a) $\nabla\cdot\omega=0$, (b) $\int_M \omega dx=0$, and (c) for any $\omega$ satisfying (a) and (b) there exists unique velocity field $u$ such that $\curl u=\omega$, $\dive u=0$, and $\int_M udx=0$. In addition, if $\omega\in H^{s-1}$, then $u\in H^s$, and the operator $\curl$ establishes a continuous isomorphism between the spaces $H^s_0$ of velocities and $H^{s-1}_0$ of vorticities satisfying both (a) and (b). Let us denote the operator inverse to $\curl$ by $\curl^{-1}$.

Then the vorticity equation is obtained from (2.4) by applying the operator $\curl^{-1}$:

\be
\frac{\pa g_t(x)}{\pa t}=\left(\curl^{-1} g_{t*}\curl u(x,0)\right)\circ g_t(x).
\ee
This equation is equivalent to the Lagrange equation (2.4) and the initial conditions (2.2).

To move forward, we have to introduce  the analytical coordinates (or parameters) in the neighborhood of unity in $\D$, so that we could talk about the analyticity of the exponential map. Here we use the local Euclidean structure of the torus which saves us some additional work compared to the general case.

For any $v \in H^s_0$ consider the map $f: \ x\mapsto x+v(x)$. This map does not, in general, preserve the volume, and we correct it by adding a gradient term: $g_v(x)=x+v(x)+\grad\phi(x)$. 

\begin{lemma}
For any $v\in H^s_0$ such that $||v||_s<\eps$ there exists unique (up to an additive constant) function $\phi\in H^{s+1}$ such that the map $g_v\in\D$, i.e. its Jacobian $\det(g_v')=1$. The correspondence $v(x) \mapsto g_v(x)-x$ is analytic as a map from $H^s_0$ to $H^s(\T^3, \R^3)$.
\end{lemma}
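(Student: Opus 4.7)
The plan is to recast the constraint $\det(g_v'(x))=1$ as a scalar nonlinear elliptic equation for $\phi$ and then apply the analytic implicit function theorem in Banach spaces from \cite{HP}.

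Write $A = v' + \nabla^2\phi$; for a $3\times 3$ matrix one has
$$\det(I+A) - 1 = \mathrm{tr}(A) + \tfrac12\bigl[(\mathrm{tr}\,A)^2 - \mathrm{tr}(A^2)\bigr] + \det(A).$$
Since $\dive v=0$ and $\mathrm{tr}(\nabla^2\phi)=\Delta\phi$, the volume-preservation condition rewrites as
$$\Delta\phi + Q(v',\nabla^2\phi) = 0,$$
where $Q$ is a fixed polynomial in the matrix entries that vanishes to second order at the origin.

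I will normalize $\phi$ by $\int_{\T^3}\phi\,dx = 0$; let $\dot H^r$ denote the subspace of $H^r(\T^3)$ with zero mean. Define
$$F\colon H^s_0\times\dot H^{s+1}\longrightarrow\dot H^{s-1},\qquad F(v,\phi):=\det(g_v')-1.$$
Three things require verification. (i) $F$ really lands in $\dot H^{s-1}$: the map $g_v$ descends to a continuous self-map of $\T^3$ of topological degree $1$ (as a continuous perturbation of the identity), and pulling back the volume form gives $\int_{\T^3}\det(g_v')\,dx=\mathrm{vol}(\T^3)$, so $F(v,\phi)$ has zero mean. (ii) $F$ is analytic: the index $s>n/2+1=5/2$ makes $H^{s-1}$ a Banach algebra under pointwise multiplication, and $F$ is a polynomial in the entries of $v'$ and $\nabla^2\phi$ (which are obtained from $v,\phi$ by bounded linear operators), hence an entire analytic map between Banach spaces. (iii) $\pa_\phi F(0,0)\psi = \Delta\psi$, and standard elliptic theory makes $\Delta\colon\dot H^{s+1}\to\dot H^{s-1}$ a Banach-space isomorphism on the torus.

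With these three ingredients the analytic implicit function theorem yields some $\eps>0$ and a unique analytic $v\mapsto\phi(v)\in\dot H^{s+1}$, defined on $\{\|v\|_s<\eps\}\subset H^s_0$, with $\phi(0)=0$ and $F(v,\phi(v))\equiv 0$; this is exactly the existence-and-uniqueness-up-to-a-constant claim. Finally, $v\mapsto g_v-\mathrm{id}=v+\grad\phi(v)$ is analytic as a map into $H^s(\T^3,\R^3)$, being the sum of the continuous inclusion $H^s_0\hookrightarrow H^s$ and of $\phi(\cdot)$ post-composed with the bounded linear operator $\grad\colon\dot H^{s+1}\to H^s$. The only step that is not essentially automatic is (ii); (i) is a topological-degree computation, (iii) is classical elliptic theory on $\T^3$, and the analyticity of a polynomial map between Banach spaces is free once the algebra property is in place.
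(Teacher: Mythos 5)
Your proof is correct and follows essentially the same route as the paper: both reduce the volume constraint to the scalar elliptic equation $\Delta\phi+Q(\nabla v,\nabla^2\phi)=0$ and conclude analyticity of $v\mapsto\phi(v)$ from the analytic implicit function theorem in Banach spaces, using the Banach algebra property of $H^{s-1}$. The only organizational difference is that the paper first applies $\Delta^{-1}$ to get a fixed-point equation $\phi=Q(v,\phi)$ solved by contraction before invoking the implicit function theorem, whereas you apply that theorem directly to $F(v,\phi)=\det(g_v')-1$ between the zero-mean spaces; your degree-theoretic verification that $F$ lands in the zero-mean subspace makes explicit a solvability condition that the paper leaves implicit in writing $\Delta^{-1}$.
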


\begin{proof}
Let $x=(x_1, x_2, x_3)$ be the locally Cartesian coordinates on the torus, $0\le x_i<2\pi$. Then the condition $g_v\in\D$ is equivalent to

\be
\det
\begin{pmatrix}
1+v_{1,1}+\phi_{,11}&v+{1,2}+\phi_{,12}&v_{1,3}+\phi_{,13}\\
v_{2,1}+\phi_{,21}&1+v_{2,2}+\phi_{,22}&v_{2,3}+\phi_{,23}\\
v_{3,1}+\phi_{,31}&v_{3,2}+\phi_{,32}&1+v_{3,3}+\phi_{,33}
\end{pmatrix}=1.
\ee
Here $v_{1,1}=\frac{\pa v_1}{\pa x_1}, \ \ \phi_{,11}=\frac{\pa^2\phi}{\pa x_1^2}$, etc. Equation (2.6) is a second order equation with respect to $\phi$. It can be written in the form

\be
\Delta\phi=P(\nabla v, \nabla^2\phi),
\ee
where $P$ is a polynomial of degree 3 whose all terms have degree 2 or 3. This is equivalent to the equation

\be
\phi=\Delta^{-1} P(\nabla v, \nabla^2\phi)=Q(v,\phi).
\ee
Operator $Q$ is analytic from $H^s_0\times H^{s+1}$ and satisfies the inequalities

\be
||Q(u,\phi)||_{s+1}\le  C(||v||_s^2+||\phi||_{s+1}^2);
\ee
\be
||\pa_v Q(v,\phi)||_{s+1}+||\pa_\phi Q(v,\phi)||_{s+1}\le C(||v||_s+||\phi||_{s+1}).
\ee
It follows then that there exists $r>0$ such that for any $v$ such that $||v||_s<r$ there exists unique solution $\phi=\phi(u)$ of equation (2.8) (this follows from the contractive map argument). Further, the analytic implicit function theorem in Banach spaces (proved below) implies that $\phi(v)$ is analytic, because $Q$ is.

Now, the map $v\in H^s_0\mapsto g_v\in\D$ is the required map defining an analytic chart on $\D$ in a neighborhood of $Id$ where

\be
g_v(x)=x+v(x)+\nabla\phi(v)(x).
\ee
\end{proof}

Now we prove that in this chart, the vorticity equation (2.5) has an analytic right hand side. The motion equations are

\beqn
&&\frac{\pa g_t}{\pa t}=u\circ g_t;\\
&&\left(\curl(u\circ g_t^{-1})\right)\circ g_t=g_t'\cdot\omega_0,
\eeqn
where $\omega_0=\curl u_0$. Let us introduce notations $g'(x)\cdot\omega_0(x)=\F_g\omega_0(x)$, and $(\curl(u\circ g^{-1}))\circ g=\G_g u(x)$. Then the velocity $u(x)$ satisfies the equation

\be
\G_g u=\F_g\omega_0.
\ee
Let us denote $u\circ g(x)$ by $U(x)$; then $u(y)=U\circ g^{-1}(y)$. Now observe that $\left(\curl(U\circ g^{-1})\right)\circ g(x)$ is for every $x$ a rational function of $g'(x)$ and $\nabla U(x)$ (linear with respect to the last argument).

For $g \in\D$, consider the space $X_g\subset H^s(M,\R^n)$ of vector functions $U(x)$ such that the field $u(y)=U(g^{-1}(y))$ is divergence free and has zero mean. This is expressed by the equation $R_g(\nabla U)=0$ where $R_g$ is a linear first order differential operator whose coefficients are rational functions of $g'$. The operator $R_g$ depends on $g \in\D$ analytically; hence the subspace $X_g$ also depends analytically on $g$.

Define another operator $\G_g\omega(x)=g'(x)\cdot\omega(x)$ which is obviously an operator from $H^{s-1}$ to $H^{s-1}$ depending analytically on $g\in\D$.

Our problem is thus equivalent to the following equations:

\beqn
&&\frac{\pa g_t}{\pa t}=U;\\
&&U\in X_g;\\
&&\F_g U=\G_g\omega.
\eeqn
Here, recall, $X_g$ is a subspace of $H^s(M,\R^3)$ depending analytically on $g$; $\G_g$ is an invertible operator from $X_g$ to $H^s_0(M,\R^3)$ depending analytically on $g$; $\F_g$ is an operator from $H^{s-1}_0(M,\R^3)$ to itself depending analytically on $g$. Hence, the flow $g_t$ satisfies the equation

\be
\frac{\pa g_t}{\pa t}=\F_{g_t}^{-1}\G_{g_t}\omega.
\ee
We solve this equation with the initial condition

\be
g_0=Id.
\ee
Now we use the following general theorem.

\begin{theorem}
Let $E, \ \ F$ be complex Banach spaces. Consider the equation

\be
\frac{d x}{d t}=f(x,y)
\ee
with the initial condition

\be
x(0)=x_0.
\ee
Here $x\in E, \ y\in F$, and $f: E\times F \to E$ is an analytic map defined in a neighborhood of $(x_0, y_0)\in E\times F$. Then there exist $r>0$ and $T>0$ such that (i) if $||y-y_0||_F<r$, then there exists unique solution $x(t,y)$ of (2.20), (2.21) for $|t|<T$; (ii) the map $(y,t)\mapsto x(y,t)$ from $B_r(y_0)\times(-T,T)$ to $E$ is analytic.
\end{theorem}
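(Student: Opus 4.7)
The plan is to recast (2.20)--(2.21) as a fixed-point equation in a Banach space of holomorphic $E$-valued functions of complex time $t$, apply the Banach contraction principle uniformly in the parameter $y$, and then deduce analytic dependence on $y$ from the fact that the Picard operator is itself analytic. Since $f$ is analytic (hence locally bounded and, by Cauchy estimates in the $x$-variable on a slightly larger polydisc, locally Lipschitz in $x$), choose $\rho, r_0 > 0$ and $M, L > 0$ so that $f$ is bounded by $M$ and $L$-Lipschitz in $x$ on the closed polydisc $\bar B_\rho(x_0) \times \bar B_{r_0}(y_0)$. Fix $T > 0$ with $TM \le \rho/2$ and $TL \le 1/2$, let $D_T \subset \mathbb{C}$ denote the closed disc of radius $T$, and let $\mathcal{X}$ be the complex Banach space of continuous maps $D_T \to E$ holomorphic on the interior, with sup norm; let $\mathcal{X}_\rho \subset \mathcal{X}$ be the closed ball of curves $x$ satisfying $\|x(t) - x_0\|_E \le \rho$ for all $t \in D_T$.

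On this function space define the Picard operator
\[
\Phi(x, y)(t) = x_0 + \int_0^t f(x(s), y) \, ds,
\]
integrated along the segment from $0$ to $t$ in $D_T$; the integrand is a holomorphic $E$-valued function of $s$, so $\Phi(x, y) \in \mathcal{X}$. The choices of $T, M, L$ make $\Phi(\cdot, y)$ a self-map of $\mathcal{X}_\rho$ and a $\frac{1}{2}$-contraction there, uniformly for $y \in \bar B_{r_0}(y_0)$. The Banach fixed-point theorem then produces, for each such $y$, a unique $x(\cdot, y) \in \mathcal{X}_\rho$ with $\Phi(x(\cdot, y), y) = x(\cdot, y)$; by construction this function is holomorphic in $t \in D_T$ and solves (2.20)--(2.21), which gives part (i).

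For analyticity in $y$, the key step is that $\Phi$ is itself an analytic map $\mathcal{X}_\rho \times \bar B_{r_0}(y_0) \to \mathcal{X}$. The local power-series expansion of $f$ about $(x_0, y_0)$ has the form $f = \sum_{n,m} P_{n,m}(x - x_0, y - y_0)$ with continuous symmetric multilinear polynomial components; substituting a curve $x(\cdot) \in \mathcal{X}_\rho$ turns each $P_{n,m}$ into a continuous $(n+m)$-homogeneous polynomial on $\mathcal{X} \times F$ with values in $C(D_T, E)$, and the sum converges in sup norm by pointwise-in-$t$ application of the polydisc estimate; integration in $t$ is bounded linear. Since $\|\partial_x \Phi\| \le \frac{1}{2}$, the operator $I - \partial_x \Phi$ is invertible at the fixed point, so implicit differentiation of $x = \Phi(x, y)$ (with the inverse given by Neumann series) shows that $y \mapsto x(\cdot, y)$ is an analytic map from a ball $\bar B_r(y_0)$ into $\mathcal{X}$.

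Joint analyticity of $(t, y) \mapsto x(t, y)$ then follows by expanding $x(\cdot, y) = \sum_n a_n(y - y_0)^n$ with each $a_n$ a continuous $n$-homogeneous polynomial $F \to \mathcal{X}$ and evaluating at $t$: the resulting double series $\sum_n a_n(t)(y - y_0)^n$ converges uniformly on $D_T \times \bar B_r(y_0)$, and each coefficient $a_n(t) \in E$ is holomorphic in $t$, giving part (ii). The main technical obstacle is the infinite-dimensional analyticity of $\Phi$, specifically verifying that the substitution map $(x, y) \mapsto f(x(\cdot), y)$ from $\mathcal{X} \times F$ into $C(D_T, E)$ is genuinely analytic and not merely continuous; this reduces, as sketched, to term-by-term analysis of the convergent Taylor series of $f$ on the polydisc, after which the contraction argument and the analytic implicit function theorem combine in a routine way.
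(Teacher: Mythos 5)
Your proof is correct, and its first half coincides with the paper's: both complexify time to the disc $D_T$, rewrite the problem as the Picard integral equation, and obtain a holomorphic-in-$t$ fixed point by contraction in a space of holomorphic curves. Where you genuinely diverge is in establishing analyticity in the parameter $y$. The paper augments the ODE with the variational equations for $u=\frac{\pa x}{\pa y}$ and $v=\frac{\pa u}{\pa y}$, solves the enlarged system by the same Picard scheme, and concludes from complex continuous differentiability that $x(t,y)$ is analytic jointly in $(t,y)$ (invoking the Hille--Phillips principle that complex $C^1$ implies analyticity). You instead prove that the Picard operator $\Phi$ is itself an analytic map on the function space $\mathcal{X}_\rho\times \bar B_{r_0}(y_0)$ --- via term-by-term substitution of curves into the Taylor series of $f$ --- and then apply the analytic implicit function theorem to $x=\Phi(x,y)$, using the Neumann series for $(I-\pa_x\Phi)^{-1}$. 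Your route shifts the technical burden onto verifying analyticity of the substitution (Nemytskii-type) operator, which you correctly identify and sketch; in exchange it avoids the slightly informal step in the paper of differentiating the equation with respect to a Banach-space parameter and asserting that the augmented right-hand sides $g,h$ are analytic. The paper's route is shorter and reuses its own part (i) as a black box on the enlarged system; yours is more self-contained at the level of the fixed-point operator and dovetails naturally with the analytic implicit function theorem that the paper proves separately anyway (Theorem 2.3), so there is no circularity. Both are sound; the comparison is a genuine trade-off of where one pays the technical price.
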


\begin{proof}
Following Lichtenstein \cite{L1}, consider the {\it complex} values of $t$: instead the segment $[-T,T]$ consider the disk $|t|\le T$. The problem (2.20), (2.21) is then equivalent to the integral equation

\be
x(y,t)=x_0+\intl_0^t f(x(y,s),y) ds.
\ee
Here $y\in B_r(y_0)$ is a parameter. The upper limit $t$ is a point of the disk $D_T: \ |t|\le T$, and the integral is along any path in $D_t$ connecting zero and $t$. We solve this equation by the Picard successive approximations:

\beqn
&&x_0(y,t)\equiv x_0;\\
&&x_1(y,t)=x_0+\intl_0^t f(x_0(y,s),y) ds;\\
&&\ldots\nonumber\\
&&x_{n+1}(y,t)=x_0+\intl_0^t f(x_n(y,s),y) ds;\\
&&\ldots\nonumber
\eeqn
Note that each iteration produces (for a fixed $y$) an analytic function $x_n(y,t)$ in $D_T$. By the usual argument of contracting maps, the sequence $x_n(y,t)$ converges to some limit $x(y,t)$ uniformly in $D_T$ as $n\to\infty$, and the limit is an analytic function in $D_T$ (because of the Cauchy formula). Thus, the solution is analytic in $t$ for every fixed $y$ In addition, the solution is bounded, $||x(y,t)||_E<C_x$ for all $y\in B_r$. It remains to prove the analytic dependence on $y\in B_r$.

We can "continue" our equation including new variables, namely $u=\frac{\pa x}{\pa y}$ and $v=\frac{\pa u}{\pa y}$. The system looks then as follows:

\beqn
&&\frac{d x}{d t}=f(x,y);\\
&&\frac{d u}{d t}=g(x,u,y);\\
&&\frac{d v}{d t}=h(x,u,v,y),
\eeqn
where $f, g, h$ are analytic maps in respective spaces. The initial conditions are $x(0)=x_0, \ u(0)=0, \ v(0)=0$. Using the previous result, we conclude that there exists unique solution $x(t,y), \ u(t,y), \ v(t,y)$ which depends analytically on $t\in D_T$, and is uniformly bounded for $y\in B_r$. This means that $x(t,y)$ is continuously differentiable with respect to $y$. It is also continuously differentiable with respect to $t$ in force of the equations. So, it is continuously differentiable with respect to $(t,y)$.

Note that the continuous differentiability holds in the complex sense. Hence, the function $x(t,y)$ is analytic in $D_T\times B_r$.
\end{proof}

For the completeness, let us prove the Analytic Implicit Function Theorem.

\begin{theorem}
Suppose $E, F, G$ are complex Banach spaces, and $\Phi: E\times F \to G$ is an analytic map defined in a neighborhood of a point $x_0, y_0)\in E\times F$, and $\Phi(x_0, y_0)=z_0$. Suppose the linear map $\frac{\pa\Phi}{\pa x}(x_0, y_0)$ is invertible, and its image is the whole space $G$. Then

(i) there exists $r>0$ such that for any $y\in F$ and any $z\in G$ such that $||y-y_0||_F\le r$ and $||z-z_0||_G\le r$ there exists unique solution $x(y,z)$ of the equation $\Phi(x,y)=z$ lying in a neighborhood of $x_0$;

(ii) The function $x(y,z)$ is analytic with respect to $(y,x)$.
\end{theorem}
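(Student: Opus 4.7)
The plan is to deduce the analytic implicit function theorem from Theorem 2.2 by a homotopy argument: parametrize $z$ and $y$ along a straight line from $(y_0, z_0)$ to the target $(y, z)$, and then recover $x(y, z)$ as the time-one value of the solution to an ODE in the path parameter $t$, whose right-hand side is designed so that $\Phi(x(t), y(t)) - z(t)$ is conserved along the flow.

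First I would use the hypothesis that $L := \pa_x \Phi(x_0, y_0): E \to G$ is a Banach space isomorphism. Since $\Phi$ is analytic, so is $\pa_x \Phi$, and since the set of invertible operators is open in the norm topology and inversion is analytic there (via the Neumann series), $\pa_x \Phi(x, y)$ is invertible on some neighborhood $U$ of $(x_0, y_0)$, with $[\pa_x \Phi(x, y)]^{-1}$ depending analytically on $(x, y)$ there. Setting $y(t) := y_0 + t(y - y_0)$ and $z(t) := z_0 + t(z - z_0)$, formal differentiation of $\Phi(x(t), y(t)) = z(t)$ in $t$ leads to the ODE
\begin{equation*}
\frac{dx}{dt} = [\pa_x \Phi(x, y(t))]^{-1} \bigl( (z - z_0) - \pa_y \Phi(x, y(t))(y - y_0) \bigr), \qquad x(0) = x_0.
\end{equation*}
The right-hand side is an analytic function of the state $x \in U$ and of the parameters $(t, y, z)$.

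Next I would invoke Theorem 2.2 with parameter $(y, z) \in F \times G$. This yields constants $r, T > 0$ such that for $\|y - y_0\|_F < r$ and $\|z - z_0\|_G < r$ there is a unique solution $x(t; y, z)$ on $|t| < T$, analytic jointly in $(t, y, z)$. Shrinking $r$ makes the right-hand side of the ODE as small as desired, and a standard Grönwall-type estimate then forces $T > 1$, so that $x(1; y, z)$ is well defined and analytic in $(y, z)$. Setting $\psi(t) := \Phi(x(t; y, z), y(t)) - z(t)$, the construction of the ODE gives $\psi'(t) \equiv 0$, while $\psi(0) = \Phi(x_0, y_0) - z_0 = 0$, so $\psi \equiv 0$; in particular $x(y, z) := x(1; y, z)$ solves $\Phi(x(y, z), y) = z$, which proves both existence and joint analyticity.

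For uniqueness, any alternative solution $\tilde x$ near $x_0$ satisfies $0 = \Phi(\tilde x, y) - \Phi(x(y, z), y) = L(\tilde x - x(y, z)) + o(\|\tilde x - x(y, z)\|)$, and invertibility of $L$ then forces $\tilde x = x(y, z)$ once the uniqueness radius is small. The one technical point I expect to require care is arranging $T \ge 1$ uniformly in $(y, z) \in B_r(y_0) \times B_r(z_0)$, which is precisely why the rescaling of $r$ in the previous step matters; beyond that, everything is a direct consequence of Theorem 2.2.
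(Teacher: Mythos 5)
Your proposal is correct, but it proves the theorem by a genuinely different route than the paper. The paper's proof is essentially a two-line reduction: part (i) is the classical contraction-mapping Implicit Function Theorem in Banach spaces, and part (ii) follows because the classical argument already yields $C^1$ dependence of the solution, which in the complex setting means complex differentiability and hence analyticity (citing Hille--Phillips). You instead \emph{derive} the implicit function theorem from Theorem 2.2 by a continuation argument: joining $(y_0,z_0)$ to $(y,z)$ by a segment and integrating the ODE $\dot x=[\pa_x\Phi(x,y(t))]^{-1}\bigl((z-z_0)-\pa_y\Phi(x,y(t))(y-y_0)\bigr)$, so that $\Phi(x(t),y(t))-z(t)$ is a conserved quantity vanishing at $t=0$. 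This is sound: invertibility of $\pa_x\Phi$ propagates to a neighborhood with analytic inverse via the Neumann series, the conservation computation is correct, and your separate uniqueness argument (which Theorem 2.2 does not supply, since it only gives uniqueness of the ODE solution, not of solutions of $\Phi(x,y)=z$) is the standard one. Two points need the care you partly anticipate: the right-hand side depends explicitly on $t$ through $y(t)$, so you must either autonomize by adjoining $\dot\tau=1$ or note that the Picard argument of Theorem 2.2 is insensitive to this; and the literal statement of Theorem 2.2 only provides \emph{some} $T>0$, so reaching $t=1$ requires reopening its proof to see that the existence time is at least $\rho/\sup\|f\|$ on a ball of radius $\rho$, which indeed tends to infinity as $r\to0$ since $\|f\|=O(r)$. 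The trade-off: the paper's route is shorter and leans on the general principle that complex $C^1$ implies analytic, while yours makes the analyticity in $(y,z)$ an automatic inheritance from the already-proved Theorem 2.2 at the cost of these quantitative refinements; there is no circularity, since Theorem 2.2 is proved independently by Picard iteration.
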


\begin{proof}
(i) This is exactly the Implicit Function Theorem for the Banach spaces proved by the contraction argument.

(ii) A part of the classical Implicit Function Theorem is the regularity of solution: $x(y,z)\in C^1$. For the complex spaces $E, F, G$ the solution $x(y,z)$ is continuously differentiable in the {\it complex} sense; this implies its analyticity, like in the classical complex analysis. (Necessary details can be found in \cite{HP}.)
\end{proof}

\bs

{\bf Remarks. } (1) Theorem 1 is inherently global; for local solution of the Euler equations (defined in a neighborhood of a point $x_0\in M$) the analyticity of trajectories does not universally hold. Consider, for example, the following solution:

\be
u(x,t)\equiv w(t); \ \ \ p(x,t)=\dot w(t)\cdot (x-x_0),
\ee 
where $w(t)$ is an arbitrary vector-function of time. 

(2) We can consider also the group exponential map $\exp: T_{Id} \D \to \D$, $u\in T_{Id}\D\mapsto \exp_u\in\D$ defined by $\exp_u(x)=\xi_1(x)$ where $\xi_t\in\D$ is a solution of the equation: $\frac{d \xi_t(x)}{dt}=u(\xi_t(x)), \ \ \xi_0(x)=x$. This map looks superficially like the geodesic exponential map $Exp$. However, as it was pointed out by Milnor \cite{Mil}, the map $\exp$ is neither analytic, nor even $C^1$. In fact, $\exp(T_{Id}\D)$ contains no neighborhood of $Id$ (unlike the geodesic exponential map $Exp$).

\end{document}